\newtheorem{proposition}{Proposition}
\newtheorem{corollary}[proposition]{Corollary}
\newtheorem*{remark*}{Remark}
\newtheorem*{proposition*}{Proposition}
\newtheorem*{acknowledgement*}{Acknowledgment}
\title{Two identities involving counts of binary matrices}
\author{Hannes Leeb\\
(Department of Statistics, University of Vienna)}
\date{\today}
\begin{document}
\sloppy
\maketitle

\begin{abstract}
In the context of generating uniform random contingency tables
with pre-specified marginals, the number of (binary) matrices with
given row- and column-sums is a well-studied object in the literature.
We will denote this number by $N(p,q)$, where $p$ and $q$ are
the vectors of row- and column-sums.
The existing literature is mainly focused on computing or approximating
$N(p,q)$.
In this paper, we present two identities for polynomials whose 
coefficients depend on the $N(p,q)$ and explore some
consequences.
\end{abstract}

There is a considerable body of literature regarding
the number of matrices with pre-specified marginals,
ranging from early works like~\cite{Goo77a} to
more recent ones such as~\cite{Arm24a} (see also the references
cited in these papers). Most of these works focus on counting the
number of matrices with given marginals, because these counts are
instrumental for generating uniform random contingency tables.
In this paper, we study a different aspect of these numbers. In particular,
we provide two identities for polynomials whose coefficients
depend on $N(p,q)$ and explore some of their consequences; 
cf. Proposition~\ref{p1} and Corollaries~\ref{c1} to~\ref{c3}. 
While the first two corollaries appear to be new, their practical
relevance is unclear. In the last corollary, Proposition~\ref{p1} is
used to obtain an efficient bound for a particular symmetric polynomial
that occurs in a problem in statistics. Finally, we point out that
Proposition~\ref{p1} is also related to Stirling numbers of the second kind.

Before we can state our result, some notation is required:
For a sequence $p=(p_1,p_2,\dots)$ of non-negative integers,
set $|p| = \sum_{i\geq 1} p_i$ and let 
$[p] = (\#\{j: p_j=i\})_{i\geq 1}$ count the number of occurrences
of $i\geq 1$ in $p$, so that $|[p]|$ is the number of non-zero
elements in, or the `length' of,  $p$ .
For $p\in \mathbb N_0^\infty$ and $x\in \mathbb R$, we write 
$p!$ and $x^{\underline{p}}$
for the product of factorials $\prod_{i\geq 1}p_i!$
and of falling factorials $\prod_{i\geq 1} x^{\underline{p_i}}$,
respectively.
If $p$ is an integer partition of $m$, i.e., if $|p|=m$ and
$p_1\geq p_2 \geq \dots$, we write $p \vdash m$.
Given two integer partitions $p$ and $q$ of $m$, we denote
the number of $|[p]|\times |[q]|$ binary matrices
whose row- and column-sums equal $p$ and $q$, respectively,
by $N(p,q)$ (for $m=0$, this number is to be interpreted as one).

\begin{proposition}\label{p1}
For $q \vdash m$ and $x\in \mathbb R$, we have
$$
	\sum_{p \vdash m}
	 \frac{x^{\underline{|[p]|}}}{[p]!}
	N(p,q) \quad=\quad
	\frac{x^{\underline{q}}}{q!}.
$$
Moreover, for $x, y\in\mathbb R$, we have
$$
\sum_{p,q \vdash m} 
\frac{x^{\underline{|[p]|}}}{[p]!}
\frac{y^{\underline{|[q]|}}}{[q]!}
N(p,q)
\quad=\quad \frac{(x y)^{\underline{m}}}{m!}.
$$
\end{proposition}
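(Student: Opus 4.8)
The plan is to reduce the double sum to a single sum by means of the first identity already stated in the proposition, and then to evaluate the remaining sum by a generating-function computation.

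First I would hold $q$ fixed and recognize the inner sum over $p$ as exactly the left-hand side of the first identity. This gives at once
$$
\sum_{p,q\vdash m}\frac{x^{\underline{|[p]|}}}{[p]!}\frac{y^{\underline{|[q]|}}}{[q]!}N(p,q)
=\sum_{q\vdash m}\frac{y^{\underline{|[q]|}}}{[q]!}\,\frac{x^{\underline{q}}}{q!},
$$
so everything reduces to proving that the right-hand side equals $(xy)^{\underline m}/m!$. This remaining statement no longer involves the matrix counts $N(p,q)$ at all.

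For this single-sum identity I would use ordinary generating functions. The starting observation is $x^{\underline q}/q!=\prod_{i\ge1}\binom{x}{i}^{[q]_i}$, so a partition $q\vdash m$ is best encoded by its multiplicity vector $([q]_i)_{i\ge1}$. Summing $z^m\,\bigl(y^{\underline{|[q]|}}/[q]!\bigr)\bigl(x^{\underline q}/q!\bigr)$ over all $m$ and all $q\vdash m$ turns the constraint $|q|=m$ into a free sum over multiplicity vectors, with each part of size $i$ contributing a factor $\binom{x}{i}z^i$; grouping the terms by the total number of parts $k=|[q]|=\sum_i[q]_i$ and applying the multinomial theorem collapses the inner sum to $\tfrac1{k!}\bigl(\sum_{i\ge1}\binom{x}{i}z^i\bigr)^k$. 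The sum over $k$ is then resolved by the binomial series $\sum_{k\ge0}\frac{y^{\underline k}}{k!}w^k=(1+w)^y$ with $w=\sum_{i\ge1}\binom{x}{i}z^i=(1+z)^x-1$, which yields $(1+z)^{xy}$. Reading off the coefficient of $z^m$ gives $\binom{xy}{m}=(xy)^{\underline m}/m!$, and since $(1+z)^x$ is defined for all real $x$ via its binomial series, the identity holds for all $x,y\in\mathbb R$.

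I expect the main obstacle to be the handling of the factor $y^{\underline{|[q]|}}$: unlike $x^{\underline q}/q!$, it does not factor across the parts of $q$, so it cannot be absorbed into a product over $i$. The trick that makes the computation go through is to postpone it, first summing over all multiplicity vectors with a fixed number of parts $k$ (where the multinomial theorem applies cleanly) and only then summing over $k$ against $y^{\underline k}$. As a sanity check and an alternative route, I would also verify the single-sum identity combinatorially for positive integers: $\binom{xy}{m}$ counts the $m$-element subsets of an $x\times y$ grid, and classifying such a subset by the multiset $q$ of its nonzero row-sums gives $y^{\underline{|[q]|}}/[q]!$ ways to place and label the occupied rows and $x^{\underline q}/q!=\prod_j\binom{x}{q_j}$ ways to choose the columns within them; the two sides then agree as polynomials in $x,y$ because they agree at all positive integers.
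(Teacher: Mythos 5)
Your treatment of the second identity is sound: the reduction of the double sum to the single-sum identity $\sum_{q\vdash m}\frac{y^{\underline{|[q]|}}}{[q]!}\frac{x^{\underline q}}{q!}=\frac{(xy)^{\underline m}}{m!}$ is exactly the reduction the paper makes, and your generating-function evaluation of that sum (grouping multiplicity vectors by the number of parts $k$, applying the multinomial theorem, and then summing $\sum_k\binom{y}{k}w^k=(1+w)^y$ with $w=(1+z)^x-1$) is a correct and legitimate alternative to the paper's argument, which instead observes that both sides count $k\times l$ binary matrices with $m$ ones and then interpolates in each variable. Your ``sanity check'' at the end is in fact essentially the paper's proof of this step.

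The genuine gap is that you never prove the first identity. The proposition has two assertions, and the first one --- $\sum_{p\vdash m}\frac{x^{\underline{|[p]|}}}{[p]!}N(p,q)=\frac{x^{\underline q}}{q!}$ --- is the only place where the matrix counts $N(p,q)$ actually enter; your plan invokes it as ``already stated in the proposition,'' i.e.\ as a black box, and your generating-function work only handles the $N$-free remainder. Without it, nothing in your argument touches $N(p,q)$ at all. The missing step is short but essential: for a positive integer $x=k$, the left-hand side counts the $k$-row binary matrices with column sums $q$ by classifying them according to their multiset $p$ of nonzero row sums (the factor $k^{\underline{|[p]|}}/[p]!=\binom{k}{[p]}$ places those rows among the $k$ positions, and $N(p,q)$ counts the matrices once the placement is fixed), while the right-hand side $\prod_i\binom{k}{q_i}$ counts the same matrices column by column; since both sides are polynomials in $x$ of degree $m$ agreeing at $m+1$ integers, they agree identically. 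You need to supply this (or some equivalent) before the rest of your proof can stand.
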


\begin{proof}
For the first equality, 
write $P(x)$ and $Q(x)$ for the 
left-hand side and the right-hand side, respectively.
It is easy to see that $P(k)=Q(k)$ for
$k=1,\dots,m+1$, say, because both count the number of binary
matrices with $k$ rows and with column-sums given by $q$.
Since $P(x)$ and $Q(x)$ both are polynomials in $x$ for degree
$m$, it follows that $P(x)=Q(x)$ for each $x\in \mathbb R$.

For the second equality, it suffices to show that
$$
\sum_{q \vdash m} 
\frac{x^{\underline{q}}}{q!}
\frac{y^{\underline{|[q]|}}}{[q]!}
\quad=\quad \frac{(x y)^{\underline{m}}}{m!}.
$$
Write $P(x,y)$ and $Q(x,y)$ for the expressions on the left and on the
right, respectively, in the preceding display. For integers $k$ and $l$,
both $P(k,l)$ and $Q(k,l)$ count the number of $k\times l$ binary matrices
with $m$ $1$'s. For fixed $k$, we thus have
$P(k,l) = Q(k,l)$ for each integer $l$. Because $P(k,y)$ and $Q(k,y)$
are polynomials in $y$ of degree $m$, it follows that $P(k,y) = Q(k,y)$
for each integer $k$ and each $y\in \mathbb R$. For fixed $y$, both
$P(x,y)$ and $Q(x,y)$ are polynomials in $x$ of degree $m$, so that
$P(x,y) = Q(x,y)$, as claimed.
\end{proof}

\begin{remark*} \normalfont
(i)
Using the algebraic definition of the multinomial coefficient, i.e.,
using falling factorials, the result can also be written as
\begin{align*}
\sum_{p\vdash m} {x \choose [p]} N(p,q)
&\quad=\quad 
\prod_{i\geq 1}{x \choose q_i}\quad\text{and}\\
\sum_{p,q\vdash m}{x \choose [p]}{y\choose [q]} N(p,q)
&\quad=\quad
{x y \choose m}.
\end{align*}
(ii)
The combinatorial proof given above is straight-forward once the desired
statements are written as in the proposition. 
The first statement was originally derived using a 
a less elegant algebraic proof that relies on 
`conditioning on', or fixing, the first column of a
matrix counted in $N(p,q)$ (a technique also used by~\cite{Mil13a}). 
That proof crucially relies on the fact that
$x^{\underline{l+k}}/x^{\underline{l}} = (x-l)^{\underline{k}}$,
suggesting that it may be difficult to derive identities similar to
those in Proposition~\ref{p1} with the falling factorial 
$x^{\underline{l}}$ replaced by another function of $x$ and $l$.
\end{remark*}

\begin{corollary}\label{c1}
For each $q \vdash m$, we have
\begin{align*}
\sum_{p\vdash m} (-1)^{|[p]|} {|[p]| \choose [p]} N(p,q)
&\quad=\quad (-1)^m\quad\text{and}
\\
\sum_{p,q\vdash m} (-1)^{|[p]|+|[q]|} 
{|[p]| \choose [p]}
{|[q]| \choose [q]}
N(p,q)
&\quad=\quad 
\frac{1^{\underline{m}}}{m!}.
\end{align*}
Note that the right-hand side of the second equality equals one
if $m=1$ and zero if $m>1$.
\end{corollary}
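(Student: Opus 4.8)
The plan is to obtain both identities by specializing the free real variables in Proposition~\ref{p1} to $x=-1$ (and $y=-1$). The one computation that makes this work is the evaluation of the falling-factorial quotient at $-1$: since $(-1)^{\underline n}=(-1)(-2)\cdots(-n)=(-1)^n n!$ for every non-negative integer $n$, I would record that
$$
\frac{x^{\underline{|[p]|}}}{[p]!}\,\Big|_{x=-1}
=\frac{(-1)^{|[p]|}\,|[p]|!}{[p]!}
=(-1)^{|[p]|}\binom{|[p]|}{[p]},
$$
which is exactly the coefficient appearing on the left-hand sides of the corollary. So the corollary is just the proposition read off at $x=-1$.

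For the first identity, I would set $x=-1$ in the first equality of Proposition~\ref{p1}. The left-hand side becomes $\sum_{p\vdash m}(-1)^{|[p]|}\binom{|[p]|}{[p]}N(p,q)$ by the display above. For the right-hand side I would evaluate $x^{\underline q}/q!$ at $x=-1$, exploiting the product structure of the falling factorial: $(-1)^{\underline q}=\prod_{i\ge 1}(-1)^{\underline{q_i}}=\prod_{i\ge 1}(-1)^{q_i}q_i!=(-1)^{|q|}q!=(-1)^m q!$, and dividing by $q!$ leaves $(-1)^m$. This gives the first claim. For the second identity, I would set both $x=-1$ and $y=-1$ in the second equality of Proposition~\ref{p1}; applying the same evaluation to both the $p$- and the $q$-factor turns the left-hand side into $\sum_{p,q\vdash m}(-1)^{|[p]|+|[q]|}\binom{|[p]|}{[p]}\binom{|[q]|}{[q]}N(p,q)$, while the right-hand side $(xy)^{\underline m}/m!$ simply becomes $1^{\underline m}/m!$. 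The parenthetical value statement then follows because $1^{\underline m}=1\cdot 0\cdot(-1)\cdots(2-m)$ contains the factor $(1-1)=0$ as soon as $m>1$, whereas $1^{\underline 1}/1!=1$.

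Since everything reduces to a direct substitution together with two short factorial evaluations, I do not expect a genuine obstacle here; the result is essentially a corollary in the literal sense. The only points requiring care are the sign bookkeeping in $(-1)^{\underline n}=(-1)^n n!$ and the observation that the multiplicativity $x^{\underline q}=\prod_{i\ge 1}x^{\underline{q_i}}$ is what collapses the right-hand side of the first equality into a clean power of $-1$. It is also worth noting that the substitution is legitimate because Proposition~\ref{p1} holds for all real $x$ (and all real $x,y$), so evaluating at $x=y=-1$ is permitted with no further justification.
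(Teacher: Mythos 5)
Your proposal is correct and is exactly the paper's argument: the paper's entire proof reads ``Use Proposition~\ref{p1} with $x,y=-1$ and simplify,'' and your computations $(-1)^{\underline{n}}=(-1)^n n!$ and $(-1)^{\underline{q}}/q!=(-1)^m$ are precisely the intended simplification. Nothing is missing.
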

\begin{proof}
Use Proposition~\ref{p1} with $x,y=-1$ and simplify.
\end{proof}

The following result exploits the fact that, in the second identity in
Proposition~\ref{p1}, the expression on the left is a polynomial in $x$ and
$y$ while the expression on the right is a polynomial in $x y$.
\begin{corollary}\label{c2}
For $1\leq i \leq m$ and $1\leq j \leq m$, we have
$$
\sum_{k=i}^m\sum_{l=j}^m
s(k,i) s(l,j) \sum_{\stackrel{p,q \vdash m}{|[p]|=k, |[q]|=l}}
\frac{N(p,q)}{[p]![q]!}
\quad=\quad
\left\{
\begin{tabular}{c l}
$0$ &\text{ if $i \neq j$ and}\\
$\frac{s(m,i)}{m!}$ & \text{ if $i=j$,}
\end{tabular}
\right.
$$
where $s(\cdot,\cdot)$ denotes the (signed) Stirling number of the first kind.
\end{corollary}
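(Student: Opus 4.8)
The plan is to take the second identity in Proposition~\ref{p1} and simply read off the coefficients of $x^i y^j$ on both sides. Both sides there are polynomials in the two real variables $x$ and $y$, and the identity holds for \emph{every} $x,y\in\mathbb R$; hence the two polynomials are equal as polynomials, and their coefficients in the monomial basis $\{x^i y^j\}$ must agree. Producing the corollary is then a matter of computing those coefficients explicitly.

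First I would group the double sum on the left-hand side according to the lengths $k=|[p]|$ and $l=|[q]|$. Writing
$$
A_{k,l}\;=\;\sum_{\stackrel{p,q\vdash m}{|[p]|=k,\,|[q]|=l}}\frac{N(p,q)}{[p]!\,[q]!},
$$
the left-hand side of the second identity becomes $\sum_{k=1}^m\sum_{l=1}^m A_{k,l}\,x^{\underline{k}}\,y^{\underline{l}}$, where $k$ and $l$ range over $1,\dots,m$ because a partition of $m$ has between $1$ and $m$ parts. Next I would expand each falling factorial in the monomial basis through the defining relation of the signed Stirling numbers of the first kind, $x^{\underline{k}}=\sum_{i} s(k,i)\,x^i$ and likewise for $y^{\underline{l}}$. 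Since $x^{\underline{k}}$ is divisible by $x$ for $k\ge 1$, only the indices $1\le i\le k$ contribute, and $s(k,i)=0$ whenever $k<i$; collecting the coefficient of $x^i y^j$ on the left then produces exactly $\sum_{k=i}^m\sum_{l=j}^m s(k,i)\,s(l,j)\,A_{k,l}$, which is the inner expression appearing in the corollary.

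On the right-hand side I would expand $\frac{(xy)^{\underline{m}}}{m!}=\frac{1}{m!}\sum_{r} s(m,r)\,(xy)^r$. Every monomial here has the diagonal form $x^r y^r$, so the coefficient of $x^i y^j$ vanishes unless $i=j$, in which case it equals $s(m,i)/m!$. Equating the two coefficient arrays gives precisely the stated case distinction.

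The argument is essentially bookkeeping, so I do not expect a deep obstacle; the points that demand care are (a) confirming that agreement for all real $x,y$ licenses a term-by-term comparison of coefficients, which is the standard fact that a polynomial identity valid on all of $\mathbb R^2$ forces equality of coefficients, and (b) keeping the summation ranges and the sign convention of the Stirling numbers consistent—in particular using $s(k,i)=0$ for $k<i$ so that the lower limits $k\ge i$ and $l\ge j$ emerge automatically, and noting that only the diagonal monomials survive on the right. No input beyond Proposition~\ref{p1} and the definition of the signed Stirling numbers of the first kind is required.
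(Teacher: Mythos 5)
Your proposal is correct and is exactly the paper's argument: the paper's proof also reads off the coefficient of $x^i y^j$ on both sides of the second identity in Proposition~\ref{p1}, using $x^{\underline{k}}=\sum_i s(k,i)x^i$ on the left and the diagonal expansion of $(xy)^{\underline{m}}/m!$ on the right. You have merely written out the bookkeeping that the paper leaves as ``elementary to verify.''
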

\begin{proof}
It is elementary to verify that the triple-sum is the
coefficient of $x^i y^j$ on the left-hand side of the second identity
in Proposition~\ref{p1}, while the right-hand side of that identity
is just $\sum_{i=1}^m \frac{s(m,i)}{m!} (xy)^i$.
\end{proof}

Our next corollary is concerned with a particular symmetric
polynomial $R_m$ that occurs in a problem in statistics; cf.~\cite{Lee25a}.
In this problem, (upper and lower) bounds for the arguments
of the polynomial are available and
a bound on $R_m$ is desired.
Because $R_m$ is an alternating sum with many terms and large coefficients, 
the triangle inequality gives poor bounds.
Our results allow us to re-write $R_m$ in a form that is much easier to bound.
For properties of symmetric functions, 
in particular the elementary symmetric functions $e_p(\cdot)$
and of the monomial symmetric functions $m_q(\cdot)$ that we use in
the following, we refer to~\cite{Sta99a}.

\begin{corollary}\label{c3}
Define
$$
R_m\quad=\quad
\sum_{p \vdash m}
\frac{(2 |[p]|-1)!!}{(-2)^{|[p]|}}
\frac{1}{[p]!} e_p(\mu_1,\dots, \mu_k),
$$
where $e_p(\cdot)$ denotes the elementary symmetric function corresponding
to $p$. Then
$$
R_m\quad=\quad  \frac{1}{(-2)^{m} m!}
\mathbb E\left[ \left(
Z_1^2 \mu_1 + \dots + Z_k^2 \mu_k
\right)^m\right],
$$
where the $Z_1,\dots, Z_k$ are independent and identically distributed
standard Gaussian random variables.
\end{corollary}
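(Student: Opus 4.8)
The plan is to connect $R_m$ to the counts $N(p,q)$ by expanding each elementary symmetric function $e_p$ in the basis of monomial symmetric functions, and then to invoke the first identity of Proposition~\ref{p1} at the special point $x=-1/2$. The crucial observation is that the transition matrix from the $e$-basis to the $m$-basis is given precisely by the matrix counts: for $p \vdash m$ one has
$$
e_p(\mu_1,\dots,\mu_k) \;=\; \sum_{q \vdash m} N(p,q)\, m_q(\mu_1,\dots,\mu_k),
$$
since the coefficient of a monomial of type $q$ in the product $\prod_i e_{p_i}$ counts the number of ways to choose, for each part $p_i$, a set of $p_i$ distinct variables so that variable $j$ is selected with the prescribed multiplicity --- which is exactly the number of binary matrices with row-sums $p$ and column-sums $q$.

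Substituting this expansion into the definition of $R_m$ and exchanging the order of summation gives
$$
R_m \;=\; \sum_{q \vdash m} m_q(\mu_1,\dots,\mu_k) \sum_{p \vdash m} \frac{(2|[p]|-1)!!}{(-2)^{|[p]|}}\,\frac{N(p,q)}{[p]!}.
$$
Next I would rewrite the scalar prefactor as a falling factorial evaluated at $-1/2$: a short computation shows $(2\ell-1)!!/(-2)^{\ell} = (-1/2)^{\underline{\ell}}$ for every $\ell \geq 0$ (with the convention $(-1)!!=1$). The inner sum is then exactly the left-hand side of the first identity in Proposition~\ref{p1} with $x=-1/2$, so it equals $(-1/2)^{\underline{q}}/q!$. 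Factoring $(-1/2)^{\underline{q}} = \prod_i (-1/2)^{\underline{q_i}} = \prod_i (2q_i-1)!!/(-2)^{q_i}$ and using $\sum_i q_i = m$ collapses this to $\frac{1}{(-2)^m}\prod_i(2q_i-1)!!/q!$, yielding
$$
R_m \;=\; \frac{1}{(-2)^m}\sum_{q \vdash m} \frac{\prod_i(2q_i-1)!!}{q!}\, m_q(\mu_1,\dots,\mu_k).
$$

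It then remains to check that the right-hand side of the corollary produces the same symmetric polynomial. I would expand $\bigl(Z_1^2\mu_1+\dots+Z_k^2\mu_k\bigr)^m$ by the multinomial theorem, take expectations using independence and the Gaussian moment formula $\mathbb E[Z^{2a}]=(2a-1)!!$, and group the resulting monomials by their partition type $q$. Each composition of type $q$ contributes the same factor $\frac{m!}{q!}\prod_i(2q_i-1)!!$ (the zero parts being harmless, since $0!=1$ and $(-1)!!=1$), and the monomials of a fixed type sum to $m_q$. This shows $\mathbb E\bigl[(\sum_i Z_i^2\mu_i)^m\bigr] = m!\sum_{q \vdash m}\frac{\prod_i(2q_i-1)!!}{q!}\,m_q(\mu_1,\dots,\mu_k)$, which matches the expression above after dividing by $(-2)^m m!$.

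The only genuinely non-routine step is the first one --- identifying the $e$-to-$m$ transition coefficients with $N(p,q)$; once this bridge is in place, Proposition~\ref{p1} applies verbatim and the rest is bookkeeping. One should keep in mind that $m_q(\mu_1,\dots,\mu_k)$ vanishes when $q$ has more than $k$ parts, so all three displays remain consistent for finite $k$ without any further caveat.
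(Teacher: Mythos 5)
Your proof is correct and follows essentially the same route as the paper's: expand $e_p$ in the monomial basis via $e_p=\sum_{q\vdash m}N(p,q)\,m_q$, apply the first identity of Proposition~\ref{p1} at $x=-1/2$ (using $(2\ell-1)!!/(-2)^{\ell}=(-1/2)^{\underline{\ell}}$), and match the result against the multinomial/Gaussian-moment expansion of the expectation. The only difference is that you supply a combinatorial justification for the $e$-to-$m$ transition coefficients, which the paper simply quotes from the symmetric-function literature.
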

\begin{proof}
Abbreviate $e_p(\mu_1,\dots, \mu_k)$ and
$m_q(\mu_1,\dots, \mu_k)$ by $e_p$ and $m_q$, respectively.
For $p\vdash m$, $e_p$ can 
be expressed in terms of the $m_q$'s as
$$
e_p\quad=\quad \sum_{q \vdash m} N(p,q) m_q.
$$
Plugging this into the formula for $R_m$, we see that
$R_m$ is given by
\begin{align*}
& \sum_{p\vdash m} 
\frac{(2 |[p]|-1)!!}{(-2)^{|[p]|}}
\frac{1}{[p]!} \sum_{q\vdash m} N(p,q) m_q
\quad=\quad
 \sum_{q\vdash m} \sum_{p\vdash m}
\frac{(2 |[p]|-1)!!}{(-2)^{|[p]|}}
\frac{1}{[p]!} \sum_{q\vdash m} N(p,q) m_q 
\\
&=\quad 
 \sum_{q\vdash m} (-2)^{-m} \frac{(2 q-1)!!}{q!} m_q
\quad=\quad\frac{1}{(-2)^{m} m!}
\sum_{q \vdash m} {m \choose q} (2 q - 1)!! m_q,
\end{align*}
where the second equality follows from the first relation in 
Proposition~\ref{p1}
with $x=-1/2$.
Now $\mathbb E[(Z_1^2 \mu1 + \dots +Z_k^2 \mu_k)^m]$ equals
\begin{align*}
&\mathbb E\left[
\sum_{q\vdash m} {m \choose q} m_q(Z_1^2\mu_1,\dots, Z_k^2 \mu_k) \right]
\quad=\quad
\sum_{q\vdash m} {m \choose q} 
\mathbb E[m_q(Z_1^2\mu_1,\dots, Z_k^2 \mu_k)]\\
&=\quad \sum_{q\vdash m} {m \choose q} (2 q-1)!! m_q(\mu_1,\dots, \mu_k),
\end{align*}
where the last equality follows from the definition of $m_q$ and
the fact that, for any sequence $(\alpha_1,\dots, \alpha_k,0,\dots)$
with $[\alpha]=[q]$,  we have
$\mathbb E[ (Z_1^2 \mu_1)^{\alpha_1} \cdot \dots \cdot (Z_k^2 \mu_k)^{\alpha_k}]
= (2 q-1)!! \mu_1^{\alpha_1} \cdot\dots\cdot \mu_k^{\alpha_k}$
because the $Z_i$'s are i.i.d. with $\mathbb E[Z_1^{2 q_i}] = (2 q_i-1)!!$.
\end{proof}

Lastly, we point out a connection between our result and the
Stirling numbers of the second kind.
These numbers are defined through
the relation $\sum_{l=1}^m x^{\underline l} S(m,l) = x^m$.
Interestingly, this equality can also be derived from the first 
relation in Proposition~\ref{p1}:
Consider $q=(1,1,\dots,1,0,0,\dots) \vdash m$. Using the proposition
with this particular $q$ gives
\begin{align*}
\sum_{l\geq 1}
x^{\underline{l}}
\sum_{\stackrel{p\vdash m}{|[p]|=l}}
\frac{1}{[p]!} N(p,q)
\quad =\quad x^m.
\end{align*}
A little reflection shows, for $|[p]|=l$, that
$\frac{1}{[p]!} N(p,q)$ is just the number
of distinct partitions of an $m$-set into $l$  sets of size
$p_1,p_2,\dots, p_l$, respectively, so that the inner sum in the preceding
display is $S(m,l)$.

\begin{acknowledgement*}\normalfont
The code provided by \cite{Mil13a} for computing $N(p,q)$ has
been very helpful in preliminary investigations.
Also, helpful comments by Christian Krattenthaler on an earlier
version of the manuscript are greatly appreciated.
\end{acknowledgement*}

\bibliographystyle{apalike}
\bibliography{lit}

\end{document}